\theoremstyle{plain}
\newtheorem{thm}{Theorem}[section]
\newtheorem{cor}[thm]{Corollary}
\newtheorem{lem}[thm]{Lemma}
\newtheorem{prop}[thm]{Proposition}
\newtheorem{rem}[thm]{Remark}
\theoremstyle{definition}
\def\no{\noindent}
\def\bs{\bigskip}
\def\ran{\operatorname{ran}}
\begin{document}
\title{{\bf {Fredholm index of Toeplitz  pairs  with $H^{\infty}$ symbols }}
}
\author{{\normalsize Penghui Wang  and  Zeyou Zhu } \\
{ School of Mathematics, Shandong University,} {\normalsize Jinan, 250100, China} }

\date{}
\maketitle

\begin{abstract}
In the present paper, we characterize the Fredholmness of  Toeplitz pairs  on Hardy space over the bidisk with the bounded holomorphic symbols, and hence we obtain the index formula for such Toeplitz pairs.  The key to obtain the Fredholmness of such Toeplitz pairs is the $L^p$
solution of Corona Problem over $\mathbb{D}^2$.
\end{abstract}

\bs

\footnotetext{2020 AMS Subject Classification: primary 47A13; secondary 46H25.}

\bs

\no{\bf Key Words:} Toeplitz pair, Fredholm index, Hardy space over the polydisc.

\numberwithin{equation}{section}
\newtheorem{theorem}{Theorem}[section]
\newtheorem{lemma}[theorem]{Lemma}
\newtheorem{proposition}[theorem]{Proposition}
\newtheorem{corollary}[theorem]{Corollary}

\section{Introduction}
\indent\indent

 Let $\mathbb D^2\subseteq \mathbb C^2$ be the unit polydisc, $H^2(\mathbb D^2)$ be the Hardy space over $\mathbb{D}^2,$ and $H^\infty(\mathbb D^2)$ be the space of bounded holomorphic functions. We will study the Fredholmness and Fredholm index of Toeplitz pair $(T_{f_1},T_{f_2})$ on $H^2(\mathbb D^2)$ with $f_i\in H^\infty(\mathbb D^2)$.

The Fredholmness of Toeplitz pair $(T_{f_1},T_{f_2})$ with bounded holomorphic symbols on the Bergman space $L^2_a(\mathbb B_n)$ over  the unit ball $\mathbb B_n$ was studied by Putinar\cite{PM}, which was
generalized to the case of strongly pseudoconvex domains with $C^3$ boundary by Andersson and Sandberg\cite{AS}.
For Toeplitz tuple $(T_{\varphi_1},\cdots,T_{\varphi_m})$ on the Bergman space $L^2_a(\Omega)$ on a strongly pseudoconvex domains $\Omega\subseteq \mathbb C^n$ with symbols $\varphi_i\in C(\overline{\Omega})$, the Fredholm index was studied by Guo\cite{GK}, in which the essential commutativity shown in \cite{SSU} of the Toepliz algebra played the key role.

 In \cite{GW}, Guo and the first author studied the Fredholm index of Toeplitz pairs on $H^2(\mathbb D^2)$ with rational inner symbols, in which the Fredholmness can be deduced by Spectral Mapping Theorem directly. However, for Toeplitz
pair $(T_{f_{1}},T_{f_{2}})$ with bounded holomorphic symbols, the Spectral Mapping Theorem does not work.
In the present note, the Fredholmness will be obtained by using the idea in the proof for existence of $L^p$
solution of Corona Problem over $\mathbb{D}^2$  \cite{LK1,LK2}. In \cite{DS}, Douglas and Sarkar highlighted the relationship between the Corona
Theorem and Fredholmness of Toeplitz tuples over the ball and the disk, and
this connection was also emphasized by \cite{AS}.
To state the result, for $0<r<1$, set $$\mathbb{D}_r=\{z,|z|<r\},\quad\mathbb{D}_r^n=\mathbb{D}_r\times\cdots \times\mathbb{D}_r,\quad \text{and}\quad \mathbb{U}_r^n=\mathbb{D}^n\setminus \mathbb{D}_r^n,$$ and we have the following result.

\begin{thm}\label{th1}
For $f_{1}, f_{2}\in H^\infty(\mathbb D^2)$, then the Toeplitz pair $\left(T_{f_{1}}, T_{f_{2}}\right)$ on $H^{2}\left(\mathbb{D}^{2}\right)$ is Fredholm if and only if there is $0<r<1$ and  $c>0$ such that $\left|f_{1}(z)\right|^{2}+\left|f_{2}(z)\right|^{2} \geqslant c$ on ${\mathbb U}_r^2$, and in this case, the Fredholm index of the Toeplitz pair is given by $$\operatorname{Ind} \left(T_{f_{1}}, T_{f_{2}}\right)=-\operatorname{dim} H^{2}(\mathbb{D}^{2}) /( f_{1} H^{2}\left(\mathbb{D}^{2}\right)+f_{2} H^{2}(\mathbb{D}^{2})).
$$
\end{thm}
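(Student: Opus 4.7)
My plan splits the argument into necessity, sufficiency, and the computation of the index.

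For \textbf{necessity}, assume $(T_{f_{1}}, T_{f_{2}})$ is Fredholm but that no admissible pair $(r, c)$ exists. A diagonal extraction yields $\lambda_{n} = (\lambda_{n}^{(1)}, \lambda_{n}^{(2)}) \in \mathbb{D}^{2}$ with $\max(|\lambda_{n}^{(1)}|, |\lambda_{n}^{(2)}|) \to 1$ and $|f_{1}(\lambda_{n})|^{2} + |f_{2}(\lambda_{n})|^{2} \to 0$. Let $k_{\lambda_{n}}$ be the normalized reproducing kernel of $H^{2}(\mathbb{D}^{2})$ at $\lambda_{n}$. Using $\|K(\cdot, \lambda_{n})\|^{-2} = (1-|\lambda_{n}^{(1)}|^{2})(1-|\lambda_{n}^{(2)}|^{2})$ and testing against polynomials gives $k_{\lambda_{n}} \to 0$ weakly, while the analyticity of the $f_{i}$ gives $T_{f_{i}}^{*} k_{\lambda_{n}} = \overline{f_{i}(\lambda_{n})}\, k_{\lambda_{n}}$, so
$$\langle (T_{f_{1}}T_{f_{1}}^{*} + T_{f_{2}}T_{f_{2}}^{*})k_{\lambda_{n}}, k_{\lambda_{n}}\rangle = |f_{1}(\lambda_{n})|^{2} + |f_{2}(\lambda_{n})|^{2} \to 0.$$
Fredholmness of the pair forces the top Laplacian $T_{f_{1}}T_{f_{1}}^{*} + T_{f_{2}}T_{f_{2}}^{*}$ of the Koszul complex to be Fredholm and self-adjoint, hence bounded below off a finite-dimensional subspace; the above display then contradicts the weak convergence of $k_{\lambda_{n}}$ to $0$.

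For \textbf{sufficiency}, fix $r < r' < 1$ and a smooth cutoff $\chi$ with $\chi \equiv 1$ on $\overline{\mathbb{D}_{r}^{2}}$ and $\operatorname{supp}\chi \subset \mathbb{D}_{r'}^{2}$, and set
$$\psi_{i} = (1 - \chi)\,\frac{\overline{f_{i}}}{|f_{1}|^{2} + |f_{2}|^{2}}, \qquad i = 1, 2,$$
so that $\psi_{i} \in L^{\infty}(\mathbb{D}^{2})$ and $f_{1}\psi_{1} + f_{2}\psi_{2} = 1 - \chi$. Following Lin's solution of the $L^{p}$-Corona Problem on $\mathbb{D}^{2}$ \cite{LK1, LK2}, I would correct the $\psi_{i}$ by terms of the form $(-1)^{i} f_{3-i} u$, with $u$ obtained by solving a $\bar\partial$-equation in a suitable $L^{p}(\mathbb{T}^{2})$, to produce $g_{1}, g_{2} \in H^{p}(\mathbb{D}^{2})$ for some $p > 2$ with $f_{1}g_{1} + f_{2}g_{2} \equiv 1$ on $\mathbb{T}^{2}$. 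The cutoff $\chi$ traps the possible common zeros of $(f_{1}, f_{2})$ strictly inside $\mathbb{D}_{r'}^{2}$, so the $\bar\partial$-datum to be inverted is smooth and compactly supported, which lets the Lin machinery apply even though the corona inequality is imposed only on $\mathbb{U}_{r}^{2}$. Standard commutator and compactness estimates on $\mathbb{T}^{2}$ then give
$$T_{f_{1}}T_{g_{1}} + T_{f_{2}}T_{g_{2}} - I \in \mathcal{K}(H^{2}(\mathbb{D}^{2})),$$
and, combined with injectivity of the $T_{f_{i}}$ (from analyticity and $f_{i} \not\equiv 0$) and a parallel argument killing the middle cohomology of the Koszul complex, this yields Fredholmness.

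The \textbf{index formula} then follows from the Koszul Euler characteristic: $\ker T_{f_{1}} \cap \ker T_{f_{2}} = \{0\}$ and the middle cohomology vanishes by the same parametrix, so only the cokernel $H^{2}(\mathbb{D}^{2}) / (f_{1}H^{2}(\mathbb{D}^{2}) + f_{2}H^{2}(\mathbb{D}^{2}))$ contributes, producing the stated formula in the sign convention that already gives $\operatorname{Ind} T_{f} = -\dim H^{2}/fH^{2}$ for a single inner symbol. The \textbf{main obstacle} will be the sufficiency step: adapting the $L^{p}$-Corona machinery of \cite{LK1, LK2} to the annular condition on $\mathbb{U}_{r}^{2}$ rather than on all of $\mathbb{D}^{2}$, and certifying that the parametrix identity holds modulo \emph{compact} rather than merely bounded operators. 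Vanishing of the middle Koszul cohomology, as opposed to mere finite-dimensionality, is the most delicate point, and again rests on this same parametrix.
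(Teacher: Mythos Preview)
Your necessity argument is essentially the paper's: test $T_{f_1}T_{f_1}^*+T_{f_2}T_{f_2}^*$ against normalized reproducing kernels and use their weak convergence to $0$ as $\lambda\to\partial\mathbb D^2$.

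The sufficiency argument, however, has a genuine gap. You propose to produce $g_1,g_2\in H^p(\mathbb D^2)$ with $f_1g_1+f_2g_2\equiv 1$ on $\mathbb T^2$. But if $g_i\in H^p(\mathbb D^2)$ then $f_1g_1+f_2g_2\in H^p(\mathbb D^2)$, and equality with $1$ on $\mathbb T^2$ forces $f_1g_1+f_2g_2\equiv 1$ on all of $\mathbb D^2$; this is impossible whenever $Z(f_1)\cap Z(f_2)\cap\mathbb D^2\neq\emptyset$, which is exactly the case producing a nonzero index. The cutoff $\chi$ does not help here: it makes the $\bar\partial$-data smooth, but after you correct back to holomorphic functions the identity must propagate to the interior. (There is a secondary problem: $g_i\in H^p$ with $p<\infty$ need not give bounded $T_{g_i}$, and on the bidisk there are no ``standard commutator and compactness estimates'' to fall back on, since the Toeplitz algebra on $\mathbb T^2$ is not essentially commutative.)

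The paper avoids this by never seeking a global parametrix for the constant $1$. Instead it observes that $V=Z(f_1)\cap Z(f_2)$ is a compact, hence finite, subvariety of $\mathbb D^2$, so $\mathcal O(\mathbb D^2)/(f_1,f_2)\mathcal O(\mathbb D^2)$ is finite-dimensional; Lin's $\bar\partial$-technique with the cutoff is then applied not to $1$ but to each $g=f_1h_1+f_2h_2$ with $h_i\in\mathcal O(\mathbb D^2)$, producing $g_i\in H^2$ with $f_1g_1+f_2g_2=g$. This yields an injection of $H^2(\mathbb D^2)/(f_1H^2+f_2H^2)$ into $\mathcal O(\mathbb D^2)/(f_1,f_2)\mathcal O(\mathbb D^2)$ and hence $\dim\mathcal H_2<\infty$. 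The vanishing of $\mathcal H_1$ is also not obtained from a parametrix: from $f_1\zeta_1+f_2\zeta_2=0$ one sets $\phi=\zeta_1/f_2=-\zeta_2/f_1$, extends across the finite set $V$ by Hartogs, uses the lower bound on $|f_1|^2+|f_2|^2$ near $\partial\mathbb D^2$ to see $\phi\in H^2$, and then the orthogonality $-T_{f_2}^*\zeta_1+T_{f_1}^*\zeta_2=0$ gives $(T_{f_1}^*T_{f_1}+T_{f_2}^*T_{f_2})\phi=0$, whence $\phi=0$. Your ``parallel parametrix'' for $\mathcal H_1$ inherits the same obstruction as above and would need to be replaced by this direct argument.
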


Such a result can be seen as a polydisc analogue to the result of Putinar\cite{PM}. The difficulty to prove the above result is to characterize when $f_1 H^2(\mathbb D^2)+f_2 H^2(\mathbb D^2)$ is of finite codimension, which was solved by using the characteristic space theory of polynomials in \cite{GW} for the case that $f_1$ and  $f_2$ are rational inner functions.

   For Toeplitz tuple $(T_{p_1},\cdots,T_{p_n})$ with symbols in ploynomial ring $\mathbb{C}[z_1,\cdots,z_n],$
 the Fredholm index was given in Kaad and Nest\cite[Corollary 9.4]{KJ}.
If $\partial\mathbb D^n\cap(\bigcap\limits_{i=1}^n Z(p_i))=\emptyset$, then $(T_{p_1},\cdots,T_{p_n})$
is Fredholm, and
\begin{eqnarray}\label{thm:poly-index}
\operatorname{Ind}(T_{p_{1}},\cdots,T_{p_{n}})=-\operatorname{deg}
(0,P_n,\mathbb{D}^n),
\end{eqnarray}
where $P_n=(p_1,\cdots,p_n):\mathbb D^n\rightarrow\mathbb C^n$ is the polynomial map.

\begin{rem}\label{rem1}
 Let $f_i\in A(\mathbb D^2)$. Suppose that $Z(f_1)\cap Z(f_2)\cap\partial\mathbb D^2=\emptyset$, then $(T_{f_1}, T_{f_2})$ is Fredholm. Moreover, for $i=1,2$ we can find a sequence of polynomials $p_{1,n}$ and $p_{2,n}$, such that $p_{i,n}$ converge to $f_i$ uniformly. Hence, for $n$ large enough, $$Z(p_{1,n})\cap Z(p_{2,n})\cap\partial\mathbb D^2=\emptyset.$$ Set the map $P_n=(p_{1,n},p_{2,n})$ and $F=(f_1, f_2)$.  Since both the Fredholm index and $deg(0,F,\mathbb D^2)$ are topological invariants, combining with (\ref{thm:poly-index}), we have
$$
{\rm Ind}(T_{f_1}, T_{f_2})=-\deg(0, F, \mathbb D^2).
$$
\end{rem}

The paper is arranged as follows. In section 2, we will characterize the Fredholmness of Toeplitz pairs with bounded holomorphic symbols. In section 3, the case that the symbols are in $L^\infty$ will be considered.

\section{The Fredholm index of Toeplitz pairs with bounded holomorphic symbols}
The present section is devoted to the Fredholmness and the Fredholm index of Toeplitz pairs on
$H^2(\mathbb D^2)$ with bounded holomorphic symbols.
First, we recall the notation of the Koszul complex for a commuting pair. Let $\left(T_{1}, T_{2}\right)$ be a commuting  pair of operators on $H$; the Koszul complex associated with $\left(T_{1}, T_{2}\right)$ was introduced in \cite{TJ},
$$
0 \rightarrow H \stackrel{d_{1}}{\rightarrow} H \oplus H \stackrel{d_{2}}{\rightarrow} H \rightarrow 0,
$$
where the boundary operators $d_{1}, d_{2}$ are given by
$$
d_{1}(\xi)=\left(-T_{2} \xi, T_{1} \xi\right), d_{2}\left(\xi_{1}, \xi_{2}\right)=T_{1} \xi_{1}+T_{2} \xi_{2}, \quad \text { for } \xi, \xi_{1}, \xi_{2} \in H .
$$
Obviously, $d_{2} d_{1}=0$. The commuting pair $\left(T_{1}, T_{2}\right)$ is called Fredholm \cite{CR} if
\begin{eqnarray}
\label{Fredholm}\mathcal{H}_{0}=\operatorname{ker}\left(d_{1}\right), \mathcal{H}_{1}=\left.\operatorname{ker}\left(d_{2}\right)\right/\operatorname{Ran}\left(d_{1}\right) \text{ and } \mathcal{H}_{2}=\left.H\right/\operatorname{Ran}\left(d_{2}\right)
\end{eqnarray}
are all of finite dimension, and in this case, the Fredholm index of $T$ is defined by $$\operatorname{Ind}T=-\operatorname{dim} \mathcal{H}_{0}+\operatorname{dim} \mathcal{H}_{1}-\operatorname{dim} \mathcal{H}_{2}.$$

The following easy lemma may be well-known before, and the proof is left as an exercise for the readers.
\begin{lem}\label{lem2.1}
Let $T_{1}, T_{2},\cdots,T_{n}$ be bounded operators on a Hilbert space $H$, then $T_{1}H+\cdots+T_{n}H$ is closed if and only if the operator
$T_{1}T_{1}^{*}+\cdots+T_{n}T_{n}^{*}$ has the closed range, and in this case
$$\operatorname{Ran}\left(T_{1}T_{1}^{*}+\cdots+T_{n}T_{n}^{*}\right)=T_{1}H+\cdots+T_{n}H.$$
\end{lem}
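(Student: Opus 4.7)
The plan is to assemble the operators into a single row operator and then invoke the classical closed range theorem for Hilbert space operators. Define
$$
T:H^{n}\longrightarrow H,\qquad T(\xi_{1},\ldots,\xi_{n})=\sum_{i=1}^{n}T_{i}\xi_{i},
$$
where $H^{n}$ carries the usual inner product. A direct computation gives $T^{*}\eta=(T_{1}^{*}\eta,\ldots,T_{n}^{*}\eta)$, and hence
$$
TT^{*}=\sum_{i=1}^{n}T_{i}T_{i}^{*},\qquad \operatorname{Ran}(T)=\sum_{i=1}^{n}T_{i}H.
$$
So the lemma is equivalent to the single-operator statement: $\operatorname{Ran}(T)$ is closed if and only if $\operatorname{Ran}(TT^{*})$ is closed, and in that case the two coincide.

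For the equivalence of closed-range properties I would cite (or quickly reprove) the standard Hilbert-space fact that for a bounded operator $A$ between Hilbert spaces the following are equivalent: $\operatorname{Ran}(A)$ is closed, $\operatorname{Ran}(A^{*})$ is closed, $\operatorname{Ran}(AA^{*})$ is closed. The short route is to use the orthogonal decompositions $H^{n}=\ker(T)\oplus\overline{\operatorname{Ran}(T^{*})}$ and $H=\ker(T^{*})\oplus\overline{\operatorname{Ran}(T)}$; when $\operatorname{Ran}(T)$ is closed, the restriction $T\colon\ker(T)^{\perp}\to\operatorname{Ran}(T)$ is a continuous bijection, whose inverse is bounded by the open mapping theorem, and from this one reads off that $T^{*}$ and $TT^{*}$ have closed range as well, with a uniform lower bound on their nonzero singular values.

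For the set-theoretic equality $\operatorname{Ran}(TT^{*})=\operatorname{Ran}(T)$ under the closed-range assumption, the inclusion $\subseteq$ is trivial. For the reverse, take $y=T\xi\in\operatorname{Ran}(T)$ and decompose $\xi=\xi_{0}+\xi_{1}$ with $\xi_{0}\in\ker(T)$ and $\xi_{1}\in\overline{\operatorname{Ran}(T^{*})}$; by the previous step $\overline{\operatorname{Ran}(T^{*})}=\operatorname{Ran}(T^{*})$, so $\xi_{1}=T^{*}\eta$ for some $\eta\in H$, and then $y=T\xi_{1}=TT^{*}\eta\in\operatorname{Ran}(TT^{*})$.

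There is no serious obstacle here: once one performs the row-operator reduction, everything follows from the standard closed range theorem on Hilbert spaces and the open mapping theorem. The only step worth writing out carefully is the passage from closedness of $\operatorname{Ran}(T)$ to closedness of $\operatorname{Ran}(T^{*})$, which is what makes the orthogonal decomposition argument for $\operatorname{Ran}(TT^{*})=\operatorname{Ran}(T)$ go through.
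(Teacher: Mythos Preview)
Your argument is correct. The paper does not actually supply a proof of this lemma: it is explicitly labeled as ``well-known'' and ``left as an exercise for the readers.'' Your row-operator reduction $T=[T_{1},\ldots,T_{n}]\colon H^{n}\to H$ with $TT^{*}=\sum_{i}T_{i}T_{i}^{*}$ and $\operatorname{Ran}(T)=\sum_{i}T_{i}H$, followed by the standard closed range theorem, is exactly the kind of routine verification the authors had in mind, and the set-theoretic equality $\operatorname{Ran}(TT^{*})=\operatorname{Ran}(T)$ via the decomposition $H^{n}=\ker(T)\oplus\operatorname{Ran}(T^{*})$ is handled cleanly.
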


To continue, we need the following lemma, the proof is inspired by the proof of the $L^p$-solution of the corona problem  \cite{LK1,LK2}.

\begin{lem}\label{le1} Suppose $f_{1}, f_{2} \in H^{\infty}\left(\mathbb{D}^{2}\right)$. Then
$$\operatorname{dim} H^{2}\left(\mathbb{D}^{2}\right) \left/ (f_{1}H^{2}\left(\mathbb{D}^{2}\right)+f_2 H^2(\mathbb D^2))<\infty\right.$$
 if and only if
there is $0<r<1$ and  $\delta>0$ such that $|f_1|^2+|f_2|^2>\delta$ on $\mathbb{U}_r^2.$
\end{lem}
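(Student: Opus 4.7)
I will treat the two directions separately. \emph{Necessity:} Suppose $\dim H^2/(f_1H^2+f_2H^2)<\infty$. A bounded linear operator with finite-codimensional range always has closed range (augment by a finite-rank map to a complement to obtain a bounded surjection, then invoke the open mapping theorem), so by Lemma~\ref{lem2.1} the positive self-adjoint operator $T:=T_{f_1}T_{f_1}^*+T_{f_2}T_{f_2}^*$ has closed range of finite codimension, hence is Fredholm, giving $T\geq c$ on $(\ker T)^\perp$ for some $c>0$ with $\dim\ker T<\infty$. The identity $T_{f_i}^*k_z=\overline{f_i(z)}k_z$ for the reproducing kernel $k_z$ translates into $\langle T\hat k_z,\hat k_z\rangle=|f_1(z)|^2+|f_2(z)|^2$ for the normalized kernels $\hat k_z:=k_z/\|k_z\|$. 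If no such $r,\delta$ existed, I would extract $z^{(n)}\in\mathbb D^2$ with $|f_1(z^{(n)})|^2+|f_2(z^{(n)})|^2\to 0$ and one coordinate tending to $\partial\mathbb D$; polynomial density shows $\hat k_{z^{(n)}}\to 0$ weakly, the finite-dimensionality of $\ker T$ forces the $(\ker T)^\perp$-component to have norm tending to $1$, and the quadratic-form lower bound is contradicted.

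\emph{Sufficiency:} Assume $|f_1|^2+|f_2|^2\geq\delta$ on $\mathbb U_r^2$. Then $Z(f_1)\cap Z(f_2)\subseteq\overline{\mathbb D_r^2}$ is a compact complex-analytic subset of $\mathbb D^2$; since coordinate functions on any compact irreducible analytic subvariety of $\mathbb D^2$ are constant by the maximum principle, no positive-dimensional component exists, so $Z(f_1)\cap Z(f_2)=\{p_1,\ldots,p_N\}$ is finite. At each $p_j$ the ideal $(f_1,f_2)_{p_j}$ is $\mathfrak m_{p_j}$-primary (isolated common zero), giving $d_j:=\dim_{\mathbb C}\mathcal O_{\mathbb D^2,p_j}/(f_1,f_2)<\infty$. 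Let $V\subseteq H^2$ be the closed subspace of functions whose germ at each $p_j$ lies in $(f_1,f_2)_{p_j}$; then $\dim H^2/V\leq\sum_j d_j$, so it is enough to show $V\subseteq f_1H^2+f_2H^2$.

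For $g\in V$, pick a smooth partition of unity $\chi_0+\sum_{j=1}^N\chi_j\equiv 1$ on $\mathbb D^2$ with $\mathrm{supp}\,\chi_0\subseteq\mathbb U_r^2$ and each $\chi_j\equiv 1$ in a small neighborhood of $p_j$. On $\mathrm{supp}\,\chi_0$ use the pointwise inverse $\bar f_i g/(|f_1|^2+|f_2|^2)$; on $\mathrm{supp}\,\chi_j$ use the local analytic $u_j,v_j$ afforded by $g\in(f_1,f_2)_{p_j}$. Gluing with $\chi_0$ and $\chi_j$ yields $\tilde u,\tilde v\in C^\infty(\mathbb D^2)$ with $f_1\tilde u+f_2\tilde v=g$. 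The Koszul correction $u=\tilde u+f_2\eta$, $v=\tilde v-f_1\eta$ produces holomorphic $u,v$ provided
$$\bar\partial\eta=\omega:=\frac{\bar f_1\bar\partial\tilde v-\bar f_2\bar\partial\tilde u}{|f_1|^2+|f_2|^2};$$
the consistency relation $f_1\bar\partial\tilde u+f_2\bar\partial\tilde v=\bar\partial g=0$ together with the vanishing of $\bar\partial\tilde u,\bar\partial\tilde v$ near each $p_j$ (because $\chi_j\equiv 1$ there and $u_j,v_j$ are holomorphic) make $\omega$ a smooth $(0,1)$-form on $\mathbb D^2$. Solving $\bar\partial\eta=\omega$ via the $L^p$-$\bar\partial$ estimates of \cite{LK1,LK2} produces $\eta$ for which $u,v\in H^2$ and $g=f_1u+f_2v$.

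\emph{Main obstacle.} The crux is the $\bar\partial$-step: one must solve $\bar\partial\eta=\omega$ on $\mathbb D^2$ with sufficient regularity so that $f_1\eta,f_2\eta\in H^2(\mathbb D^2)$ rather than merely $L^2_{\mathrm{loc}}$. This is where the $L^p$-corona machinery of \cite{LK1,LK2} is indispensable: their weighted $\bar\partial$-theory and Carleson-measure-type estimates on the bidisc are tailored exactly for this sort of construction. The rest of the argument (finiteness of common zeros, local-ring codimension count, partition-of-unity patching, algebraic consistency of the Koszul system) is standard once the $\bar\partial$-solvability is in hand.
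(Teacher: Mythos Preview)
Your argument is correct and follows essentially the same route as the paper: necessity via Fredholmness of $T_{f_1}T_{f_1}^*+T_{f_2}T_{f_2}^*$ together with weak convergence of normalized reproducing kernels, and sufficiency by patching a smooth Koszul-type solution $f_1\tilde u+f_2\tilde v=g$ and then correcting it to a holomorphic one using Lin's $L^p$-$\bar\partial$ estimates on the bidisc. The only cosmetic difference is that the paper uses a single radial cutoff together with \emph{global} analytic solutions $h_1,h_2\in\mathcal O(\mathbb D^2)$ (coming from $\dim\mathcal O(\mathbb D^2)/(f_1,f_2)\mathcal O(\mathbb D^2)<\infty$), whereas you localize with a partition of unity and \emph{local} analytic solutions near each common zero; near $\partial\mathbb D^2$ the resulting $\bar\partial$-data coincide and the appeal to \cite{LK1,LK2} is identical.
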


 \begin{proof}

 {\it Necessity.}  Since $f_{1}H^{2}\left(\mathbb{D}^{2}\right)+f_2 H^2(\mathbb D^2)$ is of finite codimension, by Lemma \ref{lem2.1}, we have
  $$
Ran\left(T_{f_1}T_{f_1}^*+T_{f_2}T_{f_2}^*\right)=f_{1}H^{2}\left(\mathbb{D}^{2}\right)+f_2 H^2(\mathbb D^2),
  $$
 then $T_{f_1}T_{f_1}^*+T_{f_2}T_{f_2}^*$ is a positive Fredholm operator. Hence there exists a positive invertible operator $X$ and a compact operator $K$ such that $$T_{f_1}T_{f_1}^*+T_{f_2}T_{f_2}^*=X+K.$$
 For $\lambda=(\lambda^{(1)},\lambda^{(2)})\in \mathbb{D}^2,$ let
$k_\lambda=\frac{\sqrt{1-|\lambda^{(1)}|^2}\sqrt{1-|\lambda^{(2)}|^2}}{(1-\overline{\lambda^{(1)}}z_1)(1-\overline{\lambda^{(2)}}z_2)}$ be the normalized reproducing kernel of $H^2(\mathbb D^2)$ which converges to $0$ weakly as
 $\lambda \rightarrow \partial \mathbb{D}^{2}.$ It follows that there is positive constant $c>0$ such that

 $$\lim\limits _{\lambda\rightarrow \partial \mathbb{D}^{2}}\langle K k_{\lambda}, k_{\lambda}\rangle \rightarrow 0 , ~\text{and}~\liminf\limits _{\lambda\rightarrow \partial \mathbb{D}^{2}} \left\langle X k_{\lambda}, k_{\lambda}\right\rangle =c.$$
Therefore for $\lambda_0 \in \partial \mathbb{D}^{2},$
  $$
\liminf_{\lambda\rightarrow\lambda_0}|f_1(\lambda)|^2+|f_2(\lambda)|^2= \liminf_{\lambda\rightarrow\lambda_0} ||T_{f_{1}}^{*} k_{\lambda}||^{2}+||T_{f_{2}}^{*} k_{\lambda}||^{2}=\liminf_{\lambda\rightarrow\lambda_0}(\langle X k_{\lambda}, k_{\lambda}\rangle+\langle K k_{\lambda}, k_{\lambda}\rangle)\geq c.
 $$
 Thus there is $0<r<1$ and $\delta>0$ such that
 $|f_1(\lambda)|^2+|f_2(\lambda)|^2\geq \delta$ for $\lambda \in \mathbb{U}_r^2.$

{\it Sufficiency.} Notice that $ V=Z(f_1)\cap Z(f_2)$ is a compact zero subvariety of $\mathbb D^2$, then by \cite[Theorem 14.3.1]{Ru} it is a finite set.
Let $\mathcal{O}$ be the sheaf of germs of analytic functions in $\mathbb{D}^2,$ and $(f_1,f_2)\mathcal{O}$ be the ideal generated by $\{f_1,f_2\}.$ Since the support of the analytic sheaf $\mathcal{O} /\left(f_1, f_2\right) \mathcal{O}$ coincides with the set $V,$ hence the space $\mathcal{O}(\mathbb D^2) /\left(f_1, f_2\right) \mathcal{O}(\mathbb D^2)$ is finite-dimensional, where $\mathcal{O}(\mathbb D^2)$ is the space of holomorphic
 functions over $\mathbb D^2.$ Let
  $$
  M=\{f \in H^{2}\left(\mathbb{D}^{2}\right): f=f_1 h_1+f_2h_2 ~\text{for some} ~h_1, h_2 \in \mathcal{O}(\mathbb D^2) \}.
  $$
To show that the subspace $f_1 H^2(\mathbb D^2)+f_2H^2(\mathbb D^2)$ is closed and of finite codimension, it suffices to show that
\begin{eqnarray}\label{M}
M\subset f_1 H^2(\mathbb D^2)+f_2 H^2(\mathbb D^2).
\end{eqnarray}
In fact, \eqref{M} implies that $$A:  H^2(\mathbb D^2)\left/ \sum \limits_{i=1}^2 f_iH^2(\mathbb D^2)\right. \longrightarrow \mathcal{O}(\mathbb D^2) \left/ \left(f_1, f_2\right) \mathcal{O}(\mathbb D^2)\right.$$ is injective.
Fix $g \in M$, then $g=f_1 h_1+f_2h_2$ for some $h_1, h_2 \in \mathcal{O}(\mathbb D^2).$
Let $\phi:[0,1]\times [0,1] \rightarrow[0,1]$ be a smooth function with
$$\phi(t_1, t_2)=1 ~\text{for}~ (t_1,t_2)\in [0,r+\frac{1-r} {3}] \times [0,r+\frac{1-r} {3}],$$ and $$\phi(t_1, t_2)=0 ~\text{for}~ t_1 \geq r+\frac{2(1-r)}{3}~ \text{or}~ t_2 \geq r+\frac{2(1-r)}{3}.$$ Set $$\chi(z_1,z_2)=\phi(|z_1|,|z_2|),$$
and
  \begin{eqnarray}\label{d-bar1}
\varphi_{j}=
\begin{cases}
\frac{g \bar{f_{j}}}{|f_{1}|^{2}+|f_{2}|^{2}},& |f_{1}|^{2}+|f_{2}|^{2} \neq 0,  \\
0  ,& |f_{1}|^{2}+|f_{2}|^{2} = 0 .
  \end{cases}
\end{eqnarray}
It is easy to see that the functions $l_j=\chi h_j+(1-\chi) \varphi_j$ are smooth and satisfy the identity
\begin{eqnarray}\label{smooth}
f_1 l_1+f_2 l_2=g.
\end{eqnarray}
 In general, $l_{1}$ and $l_{2}$ are not holomorphic. As in \cite{LK1}, we will use the technique on normal family of holomorphic functions.
Set
 $$G_i=\frac{1}{g}\left[l_{1} \frac{\partial l_{2}}{\partial \bar{z}_{i}}-l_{2} \frac{\partial l_{1}}{\partial \bar{z}_{i}}\right],$$
i=1,2.
 By straightforward calculations, $G_i\in C^\infty({\mathbb{D}^2}),i=1,2,$
 $$
 \frac{\partial G_{1}}{\partial \bar{z}_{2}}=\frac{\partial G_{2}}{\partial \bar{z}_{1}}
$$
in $\mathbb{D}^2,$
and
 $$
 G_1=\frac{1}{g}\left[\varphi_{1} \frac{\partial \varphi_{2}}{\partial \bar{z}_{1}}-\varphi_{2} \frac{\partial \varphi_{1}}{\partial \bar{z}_{1}}\right]=\frac{g\left(\bar{f_{1}} \frac{\partial \bar{f_{2}}}{\partial \bar{z}_{1}}-\bar{f_{2}} \frac{\partial \bar{f_{1}}}{\partial \bar{z}_{1}}\right)}{\left(| f_{1}|^{2}+|f_{2}|^{2}\right)^{2}},
  $$
  $$
  G_2=\frac{1}{g}\left[\varphi_{1} \frac{\partial \varphi_{2}}{\partial \bar{z}_{2}}-\varphi_{2} \frac{\partial \varphi_{1}}{\partial \bar{z}_{2}}\right]=\frac{g\left(\bar{f_{1}} \frac{\partial \bar{f_{2}}}{\partial \bar{z}_{2}}-\bar{f_{2}} \frac{\partial \bar{f_{1}}}{\partial \bar{z}_{2}}\right)}{\left(| f_{1}|^{2}+|f_{2}|^{2}\right)^{2}}$$
   in $\mathbb{U}_{r+\frac{2(1-r)}{3}}^2.$
Notice that
  $$
 \frac{\partial G_{1}}{\partial \bar{z}_{2}}=\frac{\partial G_{2}}{\partial \bar{z}_{1}}
$$
implies that
 the system
\begin{eqnarray}\label{d-bar1}
\left\{
\begin{aligned}
\frac{\partial b}{\partial \bar{z}_{1}}&=G_1,  \\
\frac{\partial b}{\partial \bar{z}_{2}}&=G_2
  \end{aligned}
\right.
\end{eqnarray}
is $\bar{\partial}-$ closed.
 Then
 by the proof in \cite{LK1}, the equations (\ref{d-bar1}) admit a solution $b\in L^2(\mathbb T^2).$ Put \begin{eqnarray}\label{revision-gi} g_{1}=l_{1}+b f_{2}, \quad g_{2}=l_{2}-b f_{1},\end{eqnarray} then $g_i\in H^2(\mathbb D^2)$ and
$$f_{1}g_{1}+f_{2}g_{2}=g.$$
It follows that $M\subset f_1H^2(\mathbb D^2)+f_2H^2(\mathbb D^2)$.

\end{proof}

For $\{f_1,\cdots,f_m\}\subset H^\infty(\mathbb D^n)$, using the same reasoning as the proof of Lemma \ref{le1}, we have that $$\operatorname{dim} H^{2}\left(\mathbb{D}^{n}\right) \left/ \sum\limits_{i=1}^m f_i H^2(\mathbb D^n)<\infty\right.$$
if and only if there is a constant $\delta>0$ and $0<r<1$ such that $$\sum\limits_{i=1}^m |f_i(z)|^2\geq \delta \quad\text{for}\quad z\in \mathbb{U}_r^n.$$

For example, for $m=2, n=3,$ we only prove the sufficiency. Similarly,
$ V=Z(f_1)\cap Z(f_2)$ is compact, and hence is a finite set.
 Then the space $\mathcal{O}(\mathbb D^3) /\left(f_1, f_2\right) \mathcal{O}(\mathbb D^3)$ is finite-dimensional.
 Let
  $$
  M=\{f \in H^{2}\left(\mathbb{D}^{3}\right): f=f_1 h_1+f_2h_2 ~\text{for some} ~h_1, h_2 \in \mathcal{O}(\mathbb D^3) \}.
  $$
Fix $g \in M$, then $g=f_1 h_1+f_2h_2$ for some $h_1, h_2 \in \mathcal{O}(\mathbb D^3).$
Let $\phi:[0,1]^3 \rightarrow[0,1]$ be a smooth function with
$$\phi(t_1, t_2,t_3)=1 ~\text{for}~ (t_1,t_2,t_3)\in [0,r+\frac{1-r} {3}] \times [0,r+\frac{1-r} {3}] \times [0,r+\frac{1-r} {3}],$$ and $$\phi(t_1,t_2,t_3)=0 ~\text{for}~ t_1 \geq r+\frac{2(1-r)}{3}~ \text{or}~ t_2 \geq r+\frac{2(1-r)}{3}~\text{or}~ t_3 \geq r+\frac{2(1-r)}{3}.$$
Set $$\chi(z_1,z_2,z_3)=\phi(|z_1|,|z_2|,|z_3|),$$ 
and
  \begin{eqnarray}
\varphi_{j}=
\begin{cases}
\frac{g \bar{f_{j}}}{|f_{1}|^{2}+|f_{2}|^{2}},& |f_{1}|^{2}+|f_{2}|^{2} \neq 0,  \\
0  ,& |f_{1}|^{2}+|f_{2}|^{2} = 0 .
  \end{cases}
\end{eqnarray}
It is easy to see that the functions $l_j=\chi h_j+(1-\chi) \varphi_j$ are smooth and satisfy the identity $$f_1 l_1+f_2 l_2=g.$$
Set
 $$G_i=\frac{1}{g}\left[l_{1} \frac{\partial l_{2}}{\partial \bar{z}_{i}}-l_{2} \frac{\partial l_{1}}{\partial \bar{z}_{i}}\right],$$
i=1,2,3.
Considering the equation
\begin{eqnarray}\label{d-bar}
\left\{
\begin{aligned}
\frac{\partial b}{\partial \bar{z}_{1}}&=G_1,  \\
\frac{\partial b}{\partial \bar{z}_{2}}&=G_2,\\
\frac{\partial b}{\partial \bar{z}_{3}}&=G_3,
  \end{aligned}
\right.
\end{eqnarray}
by \cite[Section 3]{LK2}, there exists a solution $b\in L^2(\mathbb T^3).$ Put \begin{eqnarray}\label{revision-gi} g_{1}=l_{1}+b f_{2}, \quad g_{2}=l_{2}-b f_{1},\end{eqnarray} then $g_i\in H^2(\mathbb D^3)$ and
$$f_{1}g_{1}+f_{2}g_{2}=g.$$
It follows that $M\subset f_1H^2(\mathbb D^3)+f_2H^2(\mathbb D^3)$.

Now, we can prove the main result Theorem \ref{th1}.

\noindent$The ~proof ~of~ Theorem~ \ref{th1}.$
  The necessity is an easy application of Lemma \ref{le1}. In fact,  suppose that
$\left(T_{f_{1}}, T_{f_{2}}\right)$ is Fredholm. By the definition, the subspace $f_1 H^2(\mathbb D^2)+f_2 H^2(\mathbb D^2)$ is closed and of finite codimension. By Lemma \ref{le1},  there is $0<r<1$ and  $c>0$ such that $$\left|f_{1}(z)\right|^{2}+\left|f_{2}(z)\right|^{2} \geqslant c,\,\text{ for } z\in \mathbb{U}_r^2.$$

 For the other direction, let $\mathcal{H}_{0}, \mathcal{H}_{1}, \mathcal{H}_{2}$ be defined as in \eqref{Fredholm} for $\left(T_{f_{1}}, T_{f_{2}}\right)$ and suppose there is $0<r<1$ and  $c>0$ such that $\left|f_{1}(z)\right|^{2}+\left|f_{2}(z)\right|^{2} \geqslant c$ on $\mathbb{U}_r^2$, then by Lemma \ref{le1}, the subspace $f_{1} H^{2}\left( \mathbb{D}^{2}\right)+f_{2} H^{2}\left( \mathbb{D}^{2}\right)$ is closed and $$\dim{\cal H}_2=\operatorname{dim} H^{2}\left( \mathbb{D}^{2}\right) / (f_{1} H^{2}\left( \mathbb{D}^{2}\right)+f_{2} H^{2}\left( \mathbb{D}^{2}\right))<\infty.$$
Next, for any $\zeta\in H^2(\mathbb D^2)$,
 $$
\begin{aligned}
\|d_1\zeta\|^2&=\left\|-T_{f_{2}} \zeta\right\|^{2}+\left\|T_{f_{1}} \zeta\right\|^{2}\\&=\sup _{0 \leq r <1} \int_{\mathbb{T}^{2}}\left|f_{2}\zeta (r \xi)\right|^{2} d m+\sup _{0 \leq r <1} \int_{\mathbb{T}^{2}}\left|f_{1}\zeta (r \xi)\right|^{2} d m\\
&\geq \sup _{0 \leq r<1} \int_{\mathbb{T}^{2}}(\left|f_{2}(r \xi)\right|^{2}+\left|f_{1}(r \xi)\right|^{2})|\zeta (r \xi)|^{2} d m \\& \geqslant  c \sup _{0 \leq r <1} \int_{\mathbb{T}^{2}}\left|\zeta (r \xi)\right|^{2} d m
=c\|\zeta\|^{2},
\end{aligned}
$$
which implies that the boundary operator $d_{1}$ is  injective and has closed range, and hence ${\cal H}_0$ is trivial.
Moreover, it is easy to see that $(\zeta_1,\zeta_2)\in \mathcal{H}_{1}$ if and only if $(\zeta_1,\zeta_2)$ solves  the following equations
$$
\left\{\begin{aligned}
& T_{f_{1}} \zeta_{1}+T_{f_{2}} \zeta_{2}=0 , \\
&-T_{f_{2}}^{*} \zeta_{1}+T_{f_{1}}^{*} \zeta_{2}=0.
\end{aligned}\right.
$$
Since $T_{f_{1}} \zeta_{1}+T_{f_{2}} \zeta_{2}=0$, hence  $\frac{-\zeta_{2}}{f_{1}}=\frac{\zeta_{1}}{f_{2}}$ on $\mathbb{D}^{2} \setminus\left( Z\left(f_{1}\right) \cup Z\left(f_{2}\right)\right)$. Set $$\phi=\frac{-\zeta_{2}}{f_{1}}=\frac{\zeta_{1}}{f_{2}},$$
which $\phi$ can be holomorphically extended to $\mathbb{D}^{2} \setminus\left( Z\left(f_{1}\right) \bigcap Z\left(f_{2}\right)\right)$ naturally. Notice that $Z\left(f_{1}\right) \cap Z\left(f_{2}\right) \cap \mathbb{D}^{2}$ is a compact holomorphic subvariety of $\mathbb D^2$, and hence $Z\left(f_{1}\right) \cap Z\left(f_{2}\right) \cap \mathbb{D}^{2}$ is a finite set. By Hartogs' Theorem,  $\phi$ can be holomorphically  extended to $\mathbb{D}^{2}$.
  Furthermore, it is easy to see that there exists $0<s<1$ and $\varepsilon>0$ such that for any $1>r>s$,
 $$
 \left|f_{1}(r \xi)\right|^{2}+\left|f_{2}(r \xi)\right|^{2}>\varepsilon, \quad \text{for all } \xi\in\mathbb T^2.
 $$
  Therefore, for all $\xi \in \mathbb{T}^{2},$
  $$|\phi(r \xi)|^{2}=\frac{\left|-\zeta_{2}(r \xi)\right|^{2}}{\left|f_{1}(r \xi)\right|^{2}}=\frac{\left|\zeta_{1}(r \xi)\right|^{2}}{\left|f_{2}(r \xi)\right|^{2}}
=\frac{\left|-\zeta_{2}(r \xi)\right|^{2}+\left|\zeta_{1}(r \xi)\right|^{2}}{\left|f_{1}(r \xi)\right|^{2}+\left|f_{2}(r \xi)\right|^{2}}<\frac{\left|-\zeta_{2}(r \xi)\right|^{2}+\left|\zeta_{1}(r \xi)\right|^{2}}{\varepsilon}.$$ It follows that $\phi \in H^{2}\left(\mathbb{D}^{2}\right)$, and hence
$$
\zeta_{2}=-f_{1} \phi, ~\zeta_{1}=f_{2} \phi.
$$
  Combining with that $-T_{f_{2}}^{*} \zeta_{1}+T_{f_{1}}^{*} \zeta_{2}=0$, we have
   $
   (T_{f_{1}}^{*} T_{f_{1}}+T_{f_{2}}^{*} T_{f_{2}}) \phi=0,
   $
   thus $$\|\zeta_1\|^2+\|\zeta_2\|^2=\|f_1\phi\|^2+\|f_2\phi\|^2=\left\langle(T_{f_{1}}^{*} T_{f_{1}}+T_{f_{2}}^{*} T_{f_{2}}) \phi, \phi\right\rangle=0,$$ and hence $(\zeta_1,\zeta_2)=0$. This implies that $\mathcal{H}_{1}=0$.
    Thus $\left(T_{f_{1}}, T_{f_{2}}\right)$ is Fredholm. It follows that
$$
\operatorname{Ind}\left(T_{f_{1}}, T_{f_{2}}\right)=-\operatorname{dim} \mathcal{H}_{2}=-\operatorname{codim}\left(f_{1} H^{2}\left(\mathbb{D}^{2}\right)+f_{2} H^{2}\left(\mathbb{D}^{2}\right)\right).
$$
$\hfill \square$

As an easy application, we have the following corollary.

\begin{cor}\label{co1}Let $f_{i}\in H^{\infty}\left(\mathbb{D}^{2}\right)$, $i=1,2$, and $F=(f_1,f_2):\mathbb D^2 \to\mathbb C^2$ be the corresponding map, then the essential spectrum
$$\sigma_{e}\left(T_{f_{1}}, T_{f_{2}}\right)=\bigcap \limits_{0<r<1}\overline{F\left(\mathbb{U}_{r}^2\right)}.$$
\end{cor}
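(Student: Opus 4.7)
The plan is to deduce the corollary directly from Theorem \ref{th1} by translating its Fredholmness criterion into the language of the essential spectrum. By definition, a point $\lambda = (\lambda_1,\lambda_2)\in\mathbb{C}^2$ belongs to $\sigma_e(T_{f_1},T_{f_2})$ precisely when the shifted commuting pair
$$(T_{f_1}-\lambda_1 I,\; T_{f_2}-\lambda_2 I) = (T_{f_1-\lambda_1},\; T_{f_2-\lambda_2})$$
fails to be Fredholm. Since $f_i-\lambda_i\in H^\infty(\mathbb{D}^2)$, Theorem \ref{th1} applies directly to this shifted pair.

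By that theorem, $(T_{f_1-\lambda_1},T_{f_2-\lambda_2})$ is Fredholm if and only if there exist $0<r<1$ and $c>0$ with
$$|f_1(z)-\lambda_1|^2+|f_2(z)-\lambda_2|^2\geq c \quad\text{for all } z\in\mathbb{U}_r^2.$$
This inequality says exactly that $\mathrm{dist}\bigl(\lambda,\,F(\mathbb{U}_r^2)\bigr)\geq \sqrt{c}>0$, which is equivalent to $\lambda\notin \overline{F(\mathbb{U}_r^2)}$. Thus $\lambda\notin \sigma_e(T_{f_1},T_{f_2})$ iff there exists some $0<r<1$ with $\lambda\notin \overline{F(\mathbb{U}_r^2)}$.

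Taking complements on both sides yields $\lambda\in\sigma_e(T_{f_1},T_{f_2})$ iff $\lambda\in\overline{F(\mathbb{U}_r^2)}$ for every $0<r<1$, which is exactly membership in $\bigcap_{0<r<1}\overline{F(\mathbb{U}_r^2)}$. No real obstacle arises beyond this translation; the substantive content has already been packaged into Theorem \ref{th1}. It is worth noting that the family $\{\overline{F(\mathbb{U}_r^2)}\}_{0<r<1}$ is a decreasing family of closed sets (as $r\uparrow 1$, the sets $\mathbb{U}_r^2$ shrink), so the intersection is nonempty and closed whenever each $\overline{F(\mathbb{U}_r^2)}$ is bounded, consistent with $\sigma_e$ being a compact subset of $\mathbb{C}^2$.
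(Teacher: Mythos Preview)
Your proof is correct and follows essentially the same approach as the paper: apply Theorem~\ref{th1} to the shifted pair $(T_{f_1-\lambda_1},T_{f_2-\lambda_2})$, recognize the resulting lower bound as the statement $\lambda\notin\overline{F(\mathbb{U}_r^2)}$, and take complements. The only cosmetic difference is that the paper phrases the bound as $|\lambda_1-f_1(z)|+|\lambda_2-f_2(z)|>\delta$ rather than with squares, and omits your closing remark about monotonicity and compactness.
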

\begin{proof}
It follows from Theorem \ref{th1} and the definition of $\sigma_{e}$ that $(\lambda_1,\lambda_2)\not\in\sigma_{e}(T_{f_1},T_{f_2})$   if and only if there is a $\delta>0$ and $0<r<1$ such that
$$
|\lambda_1-f_1(z)|+|\lambda_2-f_2(z)|>\delta,\quad\text{for}\quad z \in \mathbb{U}_{r}^2,
$$
which is equivalent to saying that $(\lambda_1,\lambda_2)\not \in \overline{F(\mathbb{U}_{r}^2)},$ for some $0<r<1$.
\end{proof}

\section{Fredholmness of Toeplitz tuples with symbols in $L^\infty$}

In the present section, we will consider Toeplitz tuple $(T_{f_1},T_{f_2},\cdots,T_{f_n})$ with symbols in $L^\infty.$
It is easy to see that in general $(T_{f_1},T_{f_2},\cdots,T_{f_n})$ is not essentially commuting.
To get the commutativity of the Toeplitz tuple, let $L_{z_i}^\infty(\mathbb T)$ be the space of
essentially bounded functions over
$\mathbb{T}$ on the variable $z_i,$ and the symbols $f_i\in L_{z_i}^\infty(\mathbb T).$
To avoid the confusion, denoted by $T_{f_i}^{(i)}(resp. ~T_{f_i})$ the Toeplitz
operator on $H^2_{z_i}(\mathbb T)(resp. ~H^2(\mathbb D^n)).$ We have the following Proposition.

\begin{prop}\label{cor2.9}For $f_i\in L_{z_i}^\infty(\mathbb T),$ assume that all $T_{f_i}^{(i)}$ are not invertible. Then the Toeplitz tuple $(T_{f_1},T_{f_2},\cdots,T_{f_n})$ on $H^2(\mathbb D^n)$ is Fredholm if and only if all $T_{f_i^{(i)}}$ are Fredholm, and in this case,
$$
\operatorname{Ind}(T_{f_1},T_{f_2},\cdots,T_{f_n}) =(-)^{n+1}\prod\limits_{i=1}^n\operatorname{Ind} T^{(i)}_{f_i}.
$$
\end{prop}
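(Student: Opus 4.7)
The plan is to exploit the tensor product structure of $H^2(\mathbb{D}^n)$. Under the natural unitary $H^2(\mathbb{D}^n)\simeq H^2(\mathbb{T})\otimes\cdots\otimes H^2(\mathbb{T})$ ($n$ factors), the assumption $f_i\in L^{\infty}_{z_i}(\mathbb{T})$ forces $T_{f_i}=I\otimes\cdots\otimes T^{(i)}_{f_i}\otimes\cdots\otimes I$, with $T^{(i)}_{f_i}$ in the $i$-th slot; in particular the tuple commutes automatically. Its Koszul complex then coincides, up to sign twists on direct summands that do not affect cohomology, with the total tensor product of the length-two single-operator complexes $K_i:\ 0\to H^2(\mathbb{T})\xrightarrow{T^{(i)}_{f_i}}H^2(\mathbb{T})\to 0$. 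I would apply the K\"unneth formula to this tensor product of Hilbert complexes: provided each $T^{(i)}_{f_i}$ has closed range,
$$
\mathcal{H}_k=\bigoplus_{\substack{S\subseteq\{1,\ldots,n\}\\ |S|=k}}\Bigl(\bigotimes_{i\in S}\operatorname{coker}\bigl(T^{(i)}_{f_i}\bigr)\Bigr)\otimes\Bigl(\bigotimes_{j\notin S}\ker\bigl(T^{(j)}_{f_j}\bigr)\Bigr),
$$
and every differential of the total complex has closed range.

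The "if" direction is then immediate: if every $T^{(i)}_{f_i}$ is Fredholm, every K\"unneth summand is finite dimensional, hence so is each $\mathcal{H}_k$. For the converse, I would first note that $T^{(i)}_{f_i}\otimes I$ has closed range iff $T^{(i)}_{f_i}$ does, so Fredholmness of the tuple forces each $T^{(i)}_{f_i}$ to have closed range (so that $\operatorname{coker}T^{(i)}_{f_i}$ is a well-defined Hilbert space). Summing the K\"unneth formula across $k$ then gives
$$
\sum_{k=0}^n\dim\mathcal{H}_k=\prod_{i=1}^n\bigl(\dim\ker T^{(i)}_{f_i}+\dim\operatorname{coker}T^{(i)}_{f_i}\bigr).
$$
The hypothesis that no $T^{(i)}_{f_i}$ is invertible is exactly what makes each factor on the right at least $1$; therefore if $\dim\ker T^{(i)}_{f_i}$ or $\dim\operatorname{coker}T^{(i)}_{f_i}$ were ever infinite, the product, and hence $\sum_k\dim\mathcal{H}_k$, would be infinite, contradicting Fredholmness of the tuple. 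Every $T^{(i)}_{f_i}$ is thus forced to be Fredholm.

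For the index I would use multiplicativity of the Euler characteristic for tensor products of Fredholm Hilbert complexes,
$$
\chi(K_1\otimes\cdots\otimes K_n)=\prod_{i=1}^n\chi(K_i)=\prod_{i=1}^n\operatorname{Ind}\bigl(T^{(i)}_{f_i}\bigr).
$$
Comparing with the signed-dimension definition of the Koszul index recorded in \eqref{Fredholm} (which at $n=1$ reduces to the usual Toeplitz index and at $n=2$ produces $-\chi$) shows that $\operatorname{Ind}(T_{f_1},\ldots,T_{f_n})=(-1)^{n+1}\chi(K_1\otimes\cdots\otimes K_n)$, yielding the stated $(-1)^{n+1}\prod_{i=1}^n\operatorname{Ind}(T^{(i)}_{f_i})$. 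The main obstacle will be executing the K\"unneth and closed-range bookkeeping cleanly in the Hilbert space setting; the non-invertibility hypothesis is essential here because otherwise an invertible factor makes some $K_i$ acyclic, rendering the tuple Koszul complex trivially exact and breaking the link between Fredholmness of the tuple and that of the individual $T^{(i)}_{f_i}$.
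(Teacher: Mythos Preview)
Your approach via the K\"unneth formula is genuinely different from the paper's. The paper deduces the proposition from a general tensor-product lemma (Lemma~\ref{lem 2.6}) proved using Curto's doubly-commuting criterion \cite[Corollary~3.7]{CR}: the tuple is Fredholm iff every positive operator $\sum_i{}^f\widetilde{T}_i$ is Fredholm. For the converse direction the paper exploits the freedom, again via Curto, to replace each $T_i$ by a suitable $S_i\in\{T_i,T_i^*\}$ so that every factor in $H/\sum_i\widetilde{S}_iH\cong\bigotimes_i(H_i/\operatorname{Ran}S_i)$ is nonzero; finiteness of the top cohomology then forces each $\operatorname{Ran}S_i$ to have finite codimension, hence to be closed. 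A second adjoint-swap handles the kernels. The index formula is quoted from \cite[Lemma~7.3]{KJ}. Your route is more conceptual and, once it goes through, yields an explicit description of every $\mathcal{H}_k$ and a self-contained index computation via multiplicativity of the Euler characteristic, where the paper only proves finiteness and defers the index to \cite{KJ}.

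There is, however, a real gap in your ``only if'' direction at precisely the point you flag as the main obstacle. You write that ``Fredholmness of the tuple forces each $T^{(i)}_{f_i}$ to have closed range'' because $T_i\otimes I$ has closed range iff $T_i$ does; but Fredholmness of the tuple only says the \emph{Koszul differentials} have closed range, not the individual $\widetilde{T}_i$. Nothing in the definition gives you closed range of $T_i\otimes I$ directly. Without that, the Hilbert-space K\"unneth identity you want is unavailable (the cokernels are not Hilbert spaces), so the product formula $\sum_k\dim\mathcal{H}_k=\prod_i(\dim\ker T_i+\dim\operatorname{coker}T_i)$ is circular at this stage. The paper's adjoint-swapping trick is exactly what fills this hole: it reduces the question to a single extreme cohomology group $\mathcal{H}_0$ or $\mathcal{H}_n$, whose tensor description $\bigotimes_i\ker S_i$ or $\bigotimes_i\ker S_i^*$ needs no K\"unneth and no prior closed-range assumption, and where the non-invertibility hypothesis makes every factor nonzero. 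You could graft that step onto your argument and then run K\"unneth afterwards; as written, the converse does not close.
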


The proof of Proposition \ref{cor2.9} comes from the following lemma easily, which is a generalization of \cite[Proposition 15.4]{CR}.
 The sufficient part was proved by \cite[Lemma 7.3]{KJ}, by using cohomology, and we will give an elementary proof.

\begin{lem}{\label{lem 2.6}}
Let $\{H_i\}_{i=1}^n$ be a set of Hilbert spaces of infinite dimension and set $$H=H_1\otimes H_2\otimes\cdots\otimes H_n.$$ Suppose all $T_i\in L(H_i)$ are not invertible. Set
$${\widetilde{T_i}}=I_{H_1}\otimes \cdots \otimes I_{H_{i-1}}\otimes T_{i}\otimes I_{H_{i+1}}\otimes\cdots\otimes I_{H_{n}}, \qquad i\leq n,$$ then the tuple $\mathcal{T}=({\widetilde{T}_1},\cdots,{\widetilde{T}_n})$ is a Fredholm tuple on $H$ if and only if all
$T_i$ are Fredholm, and in this case, $$\operatorname{Ind}\mathcal{T}=(-1)^{n+1}\prod\limits_{i=1}^n\operatorname{Ind}{T}_{i}.$$

\end{lem}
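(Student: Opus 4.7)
The plan is to identify the Koszul complex of the tuple $\mathcal T$ on $H=H_1\otimes\cdots\otimes H_n$ with the tensor product of the elementary two-term Koszul complexes
\begin{equation*}
K^\bullet(T_i)\colon\;0\;\longrightarrow\;H_i\;\xrightarrow{\;T_i\;}\;H_i\;\longrightarrow\;0
\end{equation*}
of the individual operators, and then to run a Künneth-style argument. Since each $\widetilde T_i$ acts as $T_i$ on the $i$-th tensor factor and trivially on the others, and the $\widetilde T_i$'s mutually commute, direct inspection of the Koszul differentials built from the exterior algebra $\bigwedge^\bullet\mathbb C^n$ produces an isomorphism of Hilbert-space complexes $K^\bullet(\mathcal T;H)\cong K^\bullet(T_1)\otimes K^\bullet(T_2)\otimes\cdots\otimes K^\bullet(T_n)$.

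For the sufficiency direction, if each $T_i$ is Fredholm then each factor $K^\bullet(T_i)$ has closed differential and finite-dimensional cohomology $H^0(T_i)=\ker T_i$, $H^1(T_i)=H_i/\operatorname{Ran}T_i$. A Künneth formula for tensor products of short Hilbert complexes with closed differentials and finite-dimensional cohomology yields
\begin{equation*}
H^k(\mathcal T)\;\cong\;\bigoplus_{k_1+\cdots+k_n=k}\;H^{k_1}(T_1)\otimes\cdots\otimes H^{k_n}(T_n),
\end{equation*}
so every $H^k(\mathcal T)$ is finite-dimensional and $\mathcal T$ is Fredholm. Multiplicativity of the Euler characteristic, $\chi(\mathcal T)=\prod_i\chi(T_i)$, together with the paper's sign convention $\operatorname{Ind}=-\chi$ on tuples, then produces the claimed formula $\operatorname{Ind}\mathcal T=(-1)^{n+1}\prod_i\operatorname{Ind}T_i$.

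For the necessity direction, assume $\mathcal T$ is Fredholm and fix an index $j$. For every $i\neq j$, non-invertibility of $T_i$ supplies a non-zero class $[v_i]\in H^{k_i}(T_i)$ with $k_i\in\{0,1\}$, drawn either from $\ker T_i$ or from $(\operatorname{Ran}T_i)^\perp$. For each choice $k_j\in\{0,1\}$, tensoring with the fixed vectors $v_i$ defines an injective linear map
\begin{equation*}
H^{k_j}(T_j)\;\longrightarrow\;H^{k_j+\sum_{i\neq j}k_i}(\mathcal T),\qquad [w]\;\longmapsto\;[v_1\otimes\cdots\otimes w\otimes\cdots\otimes v_n],
\end{equation*}
and finite-dimensionality of the target forces both $\ker T_j$ and $H_j/\overline{\operatorname{Ran}T_j}$ to be finite-dimensional. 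Closedness of $\operatorname{Ran}T_j$ is then extracted from closedness of the Koszul differentials of $\mathcal T$ by a Kato minimum-modulus argument based on $\gamma(\widetilde T_j)=\gamma(T_j)$, restricted to the copy of $H_j$ inside $H$ spanned by tensors $v_1\otimes\cdots\otimes\xi\otimes\cdots\otimes v_n$ with $\xi\in H_j$.

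The main obstacle is making the Hilbert-space Künneth decomposition precise and, in the necessity direction, propagating closed range from the Koszul differentials of $\mathcal T$ down to closed range of each individual $T_j$. This is handled by the observation that whenever $T_i$ has closed range, $K^\bullet(T_i)$ splits orthogonally into a finite-dimensional complex carrying its cohomology plus an exact isomorphism complex $\operatorname{Ran}T_i\xrightarrow{\sim}\operatorname{Ran}T_i$; the tensor product of such split complexes then has exactly the predicted Künneth cohomology by elementary linear algebra, while the reverse implication (non-closed $T_j$ forces non-Fredholm $\mathcal T$) follows from the min-modulus identity applied on the one-dimensional transverse subspaces of $H$ picked out by the $v_i$'s.
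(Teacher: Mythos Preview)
Your approach via the K\"unneth decomposition of $K^\bullet(\mathcal T)\cong\bigotimes_i K^\bullet(T_i)$ is genuinely different from the paper's. The paper never computes the Koszul cohomology directly; instead it exploits that the tuple is \emph{doubly commuting} and invokes Curto's criterion \cite[Corollary~3.7]{CR}, which reduces Fredholmness of $\mathcal T$ to Fredholmness of all the scalar operators $\sum_i{}^f\widetilde T_i$, and also allows one to swap any $\widetilde T_i$ for $\widetilde T_i^{\,*}$ without affecting Fredholmness. The index formula is then quoted from Kaad--Nest. Your sufficiency argument and index computation are correct and more self-contained than the paper's: once each $T_i$ is Fredholm, the orthogonal splitting of $K^\bullet(T_i)$ into a finite-dimensional cohomology piece and an exact isomorphism piece makes the K\"unneth formula elementary, and multiplicativity of $\chi$ gives the index.

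The necessity direction, however, has a real gap. You assert that non-invertibility of $T_i$ produces a nonzero $v_i$ in $\ker T_i$ or in $(\operatorname{Ran}T_i)^\perp$, but this fails precisely when $T_i$ is injective with dense, non-closed range: then both spaces are zero even though $H^1(T_i)=H_i/\operatorname{Ran}T_i\neq 0$. In that situation your injection $H^{k_j}(T_j)\hookrightarrow H^\bullet(\mathcal T)$ cannot be set up as described, and the minimum-modulus argument for closed range of $T_j$---which needs the transverse vectors $v_i$ to lie in actual kernels so that $\widetilde T_i$ vanishes on the slice $\bigotimes_{i\neq j}\mathbb C v_i\otimes H_j$---breaks down as well. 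The paper sidesteps exactly this difficulty: by passing from $T_i$ to $S_i\in\{T_i,T_i^*\}$ via Curto's result, it arranges that every $H_i/\operatorname{Ran}S_i$ is nonzero and that every $\ker S_i^*$ is nonzero, which is what makes the tensor-product identifications of the top cohomology and of the joint kernel nondegenerate. Your strategy can be repaired (one can show directly that if some $T_i$ has non-closed range then some Koszul differential of $\mathcal T$ fails to have closed range, by restricting to a slice built from vectors in $\ker T_k$ or $\ker T_k^*$ and treating the residual case where \emph{all} $T_k$ are injective with dense range by estimating $d_0$ on simple tensors), but as written the necessity argument is incomplete.
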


\begin{proof}At first, suppose that $\mathcal{T}$ is Fredholm. Notice that all $T_j$ are not invertible, then
$$
H_j/\ran T_j\not=0,\quad\text{or}\quad \ker T_j\not=0.
$$
Let $\Lambda_1=\{i, \ker T_i\not=0\}$ and $\Lambda_2=\{j,\ker T_j=0\},$
 and set $S_i=T_{i}^*$ for $i\in \Lambda_1$ and $S_i=T_i$ for $i\in\Lambda_2$. Since $\mathcal{T}$ is doubly commutative,  that is
 $$
 [\widetilde{T}_i,\widetilde{T}_j]=[\widetilde{T}_i,\widetilde{T}_j^*]=0, \quad 1\leq i,j\leq n.
 $$By \cite[Corollary 3.7]{CR}, the tuple $(\widetilde{S}_1,\cdots,\widetilde{S}_n)$ is Fredholm, where $\widetilde{S}_i$ is defined as same as $\widetilde{T}_i$ for every $i$.
 If follows that as a linear space,
 $$
 H\left/\sum\limits_{j=1}^n\widetilde{S}_i H\right.=\left(H_1/\ran S_1\right)\otimes\cdots \otimes \left(H_n/\ran S_n\right),
 $$
 which is of finite dimension. Since all $H_i/\ran S_i$ are not trivial,
  $$
  \dim H_i/\ran S_i<\infty.
  $$
   It follows that all $\ran S_i$ are closed.  Next, we will show that all $\ker S_j$ are of finite dimension. It suffices to show that $\dim\ker S_j<\infty$ for $j\in\Lambda_1$. For $i\in\Lambda_2$, since $S_i$ is not invertible, we have that $\ker S_i^*\not=0$, and hence for any $1\leq i\leq n$, $\ker S_i^*\not=0$. For $i\in\Lambda_1$, by \cite[Proposition 3.7]{CR} again, the tuple $(\widetilde{S}_1^*,\cdots, \widetilde{S}_{i-1}^*, \widetilde{S}_{i},\widetilde{S}_{i+1}^*,\cdots,\widetilde{S}_n^*)$ is Fredholm. It can be verified that
 $$
 \left(\bigcap\limits_{j\not=i} \ker \widetilde{S}^*_{j}\right)\cap \ker \widetilde{S}_{i}=\ker S_1^*\otimes\cdots\otimes\ker S_{i-1}^*\otimes \ker S_{i}\otimes \ker S_{i+1}^*\otimes\cdots\otimes \ker S_n^*
 $$
is of finite dimension, it follows that all $\ker S_{i}$ are of finite dimension. Therefore all  $S_i$ are Fredholm, and hence all $T_i$ are also Fredholm.

Next,  assume that all $T_i$ are Fredholm. We will prove that the tuple $\mathcal{T}$ is Fredholm.
By \cite[Corollary 3.7]{CR}, the tuple  $({\widetilde{T}_1},\cdots,{\widetilde{T}_n})$ is Fredholm if and only if
$\sum\limits_{i=1}^n{ }^f {\widetilde{T_i}}$ is  Fredholm for every functions $f:\{1, \cdots, n\} \rightarrow\{0,1\},$ where
$$
{ }^f {\widetilde{T_i}}= \begin{cases}{\widetilde{T_i}}^* {\widetilde{T_i}}, & f(i)=0, \\ {\widetilde{T_i}} {\widetilde{T_i}}^*, & f(i)=1.\end{cases}
$$
Now, we will show that  $\widetilde{T}_1\widetilde{T}_1^*+\cdots+\widetilde{T}_n\widetilde{T}_n^*$ is Fredholm. In fact,
\begin{eqnarray*}
&&\widetilde{T}_1H+\cdots+\widetilde{T}_n H\\&=&T_1H_1\otimes H_2\otimes\cdots\otimes H_n+H_1\otimes T_2 H_2\otimes H_3\otimes\cdots\otimes H_n+\cdots+ H_1\otimes\cdots\otimes H_{n-1}\otimes T_n H_n.
\end{eqnarray*}
 Since $T_iH$ is closed, $\widetilde{T}_1H+\cdots+\widetilde{T}_n H$ is closed. By Lemma \ref{lem2.1}, $$\ran\left(\sum\limits_{i=1}^n \widetilde{T}_i\widetilde{T}_i^*\right)=\widetilde{T}_1H+\cdots+\widetilde{T}_n H$$ is closed. Moreover,
$$
\begin{aligned}
  \ker\left( \sum\limits_{i=1}^n{\widetilde{T}}_i{\widetilde{T}_i^*}\right)&=\bigcap\limits_{i=1}^n\ker \widetilde{T}_i^*=\ker T_1^* \otimes\cdots \otimes \ker T_n^*
\end{aligned}
$$
is of finite dimension. This implies that $\sum\limits_{i=1}^n{\widetilde{T}}_i{\widetilde{T}_i^*}$ is Fredholm.
The same discussions as above show that for every function $f:\{1, \cdots, n\} \rightarrow\{0,1\}$, $\sum\limits_{i=1}^n{ }^f {\widetilde{T_i}}$ is Fredholm.

 By \cite[Lemma 7.3]{KJ},
  if $\mathcal{T}=({\widetilde{T}_1},\cdots,{\widetilde{T}_n})$ is Fredholm, then
 $$\operatorname{Ind}\mathcal{T}=(-1)^{n+1}\prod\limits_{i=1}^n\operatorname{Ind}{T}_{i}.$$
 \end{proof}

 For the
remainder of the section, we will consider the Toeplitz tuple $(T_{f_1},\cdots,T_{f_n})$ on
$H^2(\mathbb {D}).$
By Andersson and Sandberg \cite{AS}, for $\{f_{i}\}_{i=1}^n\subset H^\infty(\mathbb D)$,
$\left(
T_{f_{1}},T_{f_{2}},\cdots,T_{f_{n}}\right),$ acting on $H^{2}\left(\mathbb{D}\right),$ is Fredholm if and only if there exists $0<s<1$ and $\delta>0$ such that $$\left|f_{1}(z)\right|^{2}+\left|f_{2}(z)\right|^{2}+\cdots+\left|f_{n}(z)\right|^{2}>\delta,\quad \text{for}\ |z|>s.$$
\begin{prop}\label{prop2.2}
If $\left(T_{f_{1}},T_{f_{2}},\cdots,T_{f_{n}}\right)$ is Fredholm, then $$\operatorname{Ind}(T_{f_{1}},\cdots,T_{f_{n}})=0.$$
\end{prop}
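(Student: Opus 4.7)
The plan is to compute the Koszul cohomology of the tuple explicitly by peeling off the common inner divisor and invoking Carleson's corona theorem; the conclusion is vacuous unless $n\ge 2$, which I tacitly assume. Let $\theta$ be the greatest common inner divisor of $f_1,\dots,f_n$ in $H^\infty(\mathbb D)$, obtained from the inner parts of the $f_i$ by taking the pointwise minimum of Blaschke multiplicities and the infimum of singular measures. Write $f_i=\theta g_i$, so each $g_i\in H^\infty(\mathbb D)$ and the inner parts of the $g_i$ are coprime. Since $|\theta|\le 1$ in $\mathbb D$ and $|\theta|=1$ a.e.\ on $\mathbb T$, the Fredholmness condition $\sum|f_i|^2>\delta$ for $|z|>s$ gives $\sum|g_i|^2\ge\delta$ in that boundary region. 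Coprimality of inner parts precludes any common zero of the $g_i$'s in $\mathbb D$, so continuity and compactness on $\{|z|\le s\}$ upgrade this to the global corona bound $\inf_{\mathbb D}\sum|g_i|^2>0$. Carleson's theorem then furnishes $h_1,\dots,h_n\in H^\infty(\mathbb D)$ with $\sum g_i h_i=1$.

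Since all of the multiplication operators $T_{g_i}$ and $T_{h_j}$ commute and satisfy $\sum T_{g_j}T_{h_j}=I$, the formula $s(\omega)=\sum_j T_{h_j}(e_j\wedge\omega)$ defines a contracting homotopy on the Koszul complex $K(T_g)=\Lambda^\bullet\mathbb C^n\otimes H^2(\mathbb D)$ with differential $\partial^g$, verifying $s\partial^g+\partial^g s=\operatorname{id}$; so $K(T_g)$ is acyclic. Because $T_{f_i}=T_\theta T_{g_i}$ and $T_\theta$ commutes with each $T_{g_j}$, the Koszul differential of $K(T_f)$ factors componentwise as $\partial^p_f=T_\theta\cdot\partial^p_g$. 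Injectivity of $T_\theta$ then gives $\ker\partial^p_f=\ker\partial^p_g=:V_p$ and $\operatorname{im}\partial^{p-1}_f=T_\theta V_p$, hence the $p$-th cohomology is $\mathcal H_p=V_p/T_\theta V_p$. In particular $\mathcal H_n=H^2(\mathbb D)/\theta H^2(\mathbb D)=K_\theta$, whose finite-dimensionality (forced by Fredholmness) makes $\theta$ a finite Blaschke product of degree $k:=\dim K_\theta$.

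Acyclicity of $K(T_g)$ provides $T_\theta$-equivariant short exact sequences $0\to V_p\hookrightarrow K^p\xrightarrow{\partial^p_g}V_{p+1}\to 0$ for $0\le p\le n-1$. The snake lemma applied with the injective operator $T_\theta$ yields
$$0\to V_p/T_\theta V_p\to K^p/T_\theta K^p\to V_{p+1}/T_\theta V_{p+1}\to 0,$$
so $\dim\mathcal H_p+\dim\mathcal H_{p+1}=\binom{n}{p}k$. Combined with $\dim\mathcal H_0=0$, this recurrence has unique solution $\dim\mathcal H_p=\binom{n-1}{p-1}k$ for $p\ge 1$. Substituting into the alternating-sum index formula gives
$$\operatorname{Ind}(T_{f_1},\dots,T_{f_n})=\sum_{p=0}^n(-1)^{p+1}\dim\mathcal H_p=k\sum_{q=0}^{n-1}(-1)^q\binom{n-1}{q}=k(1-1)^{n-1}=0.$$

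The main obstacle is the global corona lower bound on $(g_1,\dots,g_n)$: the near-boundary control of $\sum|f_i|^2$ does not immediately transfer to the interior since $\theta$ can be small there. This is handled by combining $|\theta|\le 1$ with the absence of common interior zeros of the $g_i$'s, itself a consequence of the coprimality of their inner parts. The remaining pieces---the contracting homotopy, snake lemma, and the vanishing identity $\sum_q(-1)^q\binom{n-1}{q}=0$---are routine.
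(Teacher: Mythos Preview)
Your argument is correct and follows a genuinely different path from the paper's. Both proofs share the opening move: factor $f_i=\theta g_i$ with $\theta$ the common inner divisor (a finite Blaschke product once Fredholmness forces $\dim K_\theta<\infty$) and use Carleson's corona theorem to make $(T_{g_1},\dots,T_{g_n})$ Taylor invertible. After that the paper never computes cohomology; it quotes Curto \cite{CR} to the effect that a commuting tuple one of whose entries is a Fredholm operator has index $0$, combines this with Fang's additivity formula \cite{FX} to write $\operatorname{Ind}(T_{B\tilde f_1},T_{f_2},\dots)=\operatorname{Ind}(T_B,T_{f_2},\dots)+\operatorname{Ind}(T_{\tilde f_1},T_{f_2},\dots)$, and inducts over the coordinates until the invertible tuple $(T_{\tilde f_1},\dots,T_{\tilde f_n})$ is reached. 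Your route is self-contained: from $\partial_f=T_\theta\,\partial_g$ and exactness of $K(T_g)$ you identify $\mathcal H_p\cong V_p/T_\theta V_p$, run the snake lemma on $0\to V_p\to K^p\to V_{p+1}\to 0$, and solve the resulting recursion to get $\dim\mathcal H_p=\binom{n-1}{p-1}\dim K_\theta$, whose alternating sum is $(1-1)^{n-1}\dim K_\theta=0$. The payoff of your approach is an explicit description of every Koszul cohomology group, not just the Euler characteristic, and independence from the cited results of \cite{CR} and \cite{FX}; the paper's approach is terser because it outsources the index bookkeeping. One cosmetic slip: your contracting homotopy $s(\omega)=\sum_j T_{h_j}(e_j\wedge\omega)$ raises degree, which matches the \emph{chain} Koszul differential, while the rest of your argument is written in cochain notation with $\partial^p_g:K^p\to K^{p+1}$; the homotopy there should be interior product rather than wedge. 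This is purely notational and does not affect the validity.
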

Before giving the proof of Proposition \ref{prop2.2}, we should point out that in Yang \cite{Y}, it was proved that
if $[T_{\phi}^{*},T_{\psi}]$ is compact, then $\operatorname{Ind}\left(T_{\phi},T_{\psi}\right)=0.$

\noindent$\emph{The proof of Proposition }~\ref{prop2.2}.$
Since $\left(T_{f_{1}},T_{f_{2}},\cdots,T_{f_{n}}\right)$ is Fredholm, there are $\delta>0$ and $0<s<1$ such that
$$
|f_1(z)|^2+|f_2(z)|^2+\cdots+|f_n(z)|^2\geq \delta, \quad \text{for}\ |z|>s.
$$
It follows that there is a finite Blaschke product $B$ such that $$f_i=B \tilde{f_{i}}\quad{\text{and}}\quad\sum\limits_{i=1}^n|\tilde{f_{i}}|^2\geq \frac{\delta}{|B|^2}\geq \delta \ \text{on}\ \mathbb D \setminus Z(B). $$
Then $$\sum\limits_{i=1}^n|\tilde{f_{i}}|^2\geq \frac{\delta}{|B|^2}\geq \delta \ \text{on}\ \mathbb D.$$
By Corona Theorem, we have $(T_{\tilde{f_{1}}},\cdots,T_{\tilde{f_{n}}})$ is invertible, and hence $$\operatorname{Ind}(T_{\tilde{f_{1}}},\cdots,T_{\tilde{f_{n}}})=0.$$
Moreover,  since $T_B$ is Fredholm, by \cite[Proposition 11.1]{CR}  for any bounded analytic function $\varphi_i$, the tuple of Toeplitz operators $(T_{\varphi_1},\cdots, T_{\varphi_i},T_{B}, T_{\varphi_{i+1}}, T_{\varphi_n})$ is Fredholm and $$\operatorname{Ind}(T_{\varphi_1},\cdots, T_{\varphi_i},T_{B}, T_{\varphi_{i+1}}, T_{\varphi_n})=0.$$
By \cite[Propsition 1]{FX},
$$
\operatorname{Ind}(T_{f_1},\cdots,T_{f_n})=\operatorname{Ind}(T_B,T_{f_2},\cdots,T_{f_n})
+\operatorname{Ind}(T_{\tilde{f_1}},T_{f_2},\cdots,T_{f_n})=
\operatorname{Ind}(T_{\tilde{f_1}},T_{f_2},\cdots,T_{f_n}).
$$
By induction, $$\operatorname{Ind}(T_{{f_1}},T_{f_2},\cdots,T_{f_n})=\operatorname{Ind}
(T_{\tilde{f_1}},T_{\tilde{f}_2},\cdots,T_{\tilde{f}_n})=0.$$ $\hfill\qedsymbol$

\noindent\textbf{Acknowledgement.} This work is supported by NSFC (No. 12271298, 11871308).  We would like to thank K. Guo (Fudan University) for valuable discussions on this topic. The authors thank the referee for helpful suggestions which make this paper more readable.

  \noindent{Penghui Wang, School of Mathematics, Shandong University, Jinan 250100, Shandong, P. R. China, Email: phwang@sdu.edu.cn}

  \noindent{Zeyou Zhu, School of Mathematics, Shandong University, Jinan 250100, Shandong, P. R. China, Email: 201911795@mail.sdu.edu.cn}

\end{document}